\newcommand{\ZZ}{\mathbb Z}
\renewcommand{\AA}{\mathbb A}
\newcommand{\FF}{\mathbb F}
\newcommand{\bsym}{\boldsymbol}
\newtheorem{lemm}{Lemma}[section]
\newtheorem{thm}[lemm]{Theorem}
\newtheorem{prop}[lemm]{Proposition}
\newtheorem{coro}[lemm]{Corollary}
\newcommand{\ceil}[1]{\left\lceil #1\right\rceil}
\newcommand{\angles}[1]{\left\langle #1\right\rangle}
\newcommand{\SL}{\mathrm{SL}}
\newcommand{\GL}{\mathrm{GL}}
\newcommand{\Hom}{\mathrm{Hom}}
\title{The \textit{v}-function in the wild McKay correspondence is not determined by the ramification filtration}
\author{Takahiro Yamamoto\thanks{Affiliation: Osaka City University Advanced Mathematical Institute (3-3-138, Sugimoto, Sumiyoshi-Ku, Osaka city, Osaka-fu),
email:mass.11235813@gmail.com}}
\date{}
\begin{document}

\maketitle
\begin{abstract}
The \(v\)-function appears in the wild McKay correspondence and it is an invariant of a Galois extension over a local field which measures about the ramification of extension. The ramification filtration is also such invariant. Yasuda presented the problem whether the \(v\)-function is determined by ramification filtration. In this paper, we give a formula for the \(v\)-function for 2-dimensional representations of the bicyclic group of order \(p^2\) with \(p\) the characteristic of our base field. Using this formula, we construct an example of \(v\)-function not determined by the ramification filtration.
\end{abstract}

\tableofcontents

\section{Introduction}
The \(v\)-function appears in the wild McKay correspondence for quotient varieties \(\AA^d/G\) associated to linear action as a weighting function (see \cite{WY}, \cite{Yas3}). This is a function on the moduli space of \(G\)-\'etale \(K\)-algebras for some local field \(K\) and it is an invariant to study ramification of the algebras. The value of \(v\)-function coincides with the valuation of generator of \textit{the module resolvent} defined by Fr\"ohlich \cite{Fro} (see also \cite[Rem 9.4]{TY}).

As another invariant to study ramification of extensions of local fields, there is \textit{the ramification filtration}, which is a sequence \((G_i)_{i\geq -1}\) of subgroups of Galois group. Yasuda presented the problem whether the \(v\)-function is determined by the ramification filtration or not in \cite[Problem 5.1]{Yas}. The \(v\)-function has been computed in some cases. For the representation of \(p\)-cyclic group over the Laurent power series field \(k((t))\)) of characteristic \(p\), a formula of the \(v\)-function computed from the ramification jump, that is the index satisfying \(G_i\ne G_{i+1}\), in \cite{Yas2}. Similarly, for the representation of \(p^n\)-cyclic group, a formula of the \(v\)-function is determined by the ramification jumps in \cite[Theorem 3.11]{TanYas}. For any permutation action, the \(v\)-function is the same as the Artin conductor, which is determined by the ramification filtration \cite[Theorem 4.8]{WY}.

In this paper, we give a formula of the \(v\)-function for subgroup \(G\subset\SL_2(k)\) isomorphic to \((\ZZ/p\ZZ)^2\).
\begin{thm}[Main theorem, Theorem \ref{thm:main}]
Let \(k\) be an algebraically closed field of characteristic \(p>0\). Let \(K\) be the Laurent power series field \(k((t))\) and let \(\mathcal O_K\) be the valuation ring of \(K\). Let \(v_K\) be the valuation of \(K\).

For a subgroup \(G\subset\SL_2(k)\) generated by
\[\sigma=\begin{bmatrix}
1&1\\
0&1
\end{bmatrix},\ \tau=\begin{bmatrix}
1&a\\
0&1
\end{bmatrix}\quad(a\in k\backslash\FF_p),\]
we consider the induced \(G\)-representation \(V=\AA_{\mathcal O_K}^2\). For a \(G\)-Galois extension \(L/K\), let \(g_1,g_2\) be the corresponding elements of \(L^\sigma, L^\tau\) by the Artin-Schreier theory. Then
\[
\bsym v_{V}(g_1,g_2)=\ceil{-\frac{\min\{v_K(g_1),pv_K(a^pg_1+g_2)\}}{p^2}}.
\]
\end{thm}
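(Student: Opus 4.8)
The plan is to reduce $\bsym v_V(g_1,g_2)$ to a lattice computation inside the tuning module and then to a single valuation in $L$. First I set up Artin–Schreier coordinates: since $G\cong(\ZZ/p\ZZ)^2$ and $k$ is algebraically closed, a connected $G$-extension $L/K$ is totally ramified of degree $p^2$, and I may write $L^\sigma=K(w_1)$, $L^\tau=K(w_2)$ with $\wp(w_1)=w_1^p-w_1=g_1$ and $\wp(w_2)=g_2$, normalised so that $\tau w_1=w_1+1$, $\sigma w_1=w_1$, $\sigma w_2=w_2+1$, $\tau w_2=w_2$. Recalling that $\bsym v_V$ is computed from the tuning module $\Xi:=(V^\vee\otimes_{\mathcal O_K}\mathcal O_L)^G$ (equivalently, from a generator of Fr\"ohlich's module resolvent), the statement becomes a computation of the elementary divisors of $\Xi$ inside $\Omega:=(V^\vee\otimes_K L)^G$.

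Next I would compute these invariants explicitly. Writing $V^\vee=\langle x,y\rangle$ with $\sigma x=x-y$, $\tau x=x-ay$, $\sigma y=\tau y=y$, an element $x\otimes\alpha+y\otimes\beta$ is $G$-invariant exactly when $\alpha\in K$ and $\sigma\beta-\beta=\alpha$, $\tau\beta-\beta=a\alpha$; solving the cocycle gives $\beta=\alpha u$ up to $K$, where $u:=aw_1+w_2$ satisfies $\sigma u=u+1$, $\tau u=u+a$. Thus $\Omega$ has $K$-basis $f_0=y\otimes1$ and $f_1=x\otimes1+y\otimes u$, and $\Xi=\{\lambda_0 f_0+\lambda_1 f_1:\ \lambda_1\in\mathcal O_K,\ \lambda_0+\lambda_1 u\in\mathcal O_L\}$. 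The content is the ideal $\{\lambda\in\mathcal O_K:\lambda u\in\mathcal O_L+K\}$; since a pole of $\lambda u$ at a valuation not in $p^2\ZZ$ cannot be cancelled by an element of $K$, this ideal equals $\mathfrak p_K^{N}$ with $N=\ceil{-v_L(u)/p^2}$, which is where the ceiling comes from. Feeding $f_0$ and $t^{N}f_1$ (adjusted by an element of $K\cdot f_0$ so as to lie in $V^\vee\otimes\mathcal O_L$) into the resolvent/colength formula gives $\bsym v_V(g_1,g_2)=\tfrac1{p^2}\,\mathrm{length}_{\mathcal O_L}\big((V^\vee\otimes_k\mathcal O_L)/\mathcal O_L\Xi\big)=N=\ceil{-v_L(u)/p^2}$.

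It then remains to compute $v_L(u)$, which is the heart of the argument. Using Frobenius-additivity and $w_1^p=g_1+w_1$, I get the clean identity $\wp(u)=u^p-u=(a^p g_1+g_2)+(a^p-a)\,w_1$, in which $w_2$ has disappeared into the coefficient $a^p g_1+g_2\in K$. Because $g_1$ is a reduced representative, $v_L(w_1)=p\,v_K(g_1)$ with $p\nmid v_K(g_1)$, while $a^p-a\ne0$ as $a\notin\FF_p$; hence the two summands of $\wp(u)$ have distinct valuations $p^2 v_K(a^p g_1+g_2)$ and $p\,v_K(g_1)$, so $v_L(\wp(u))=\min\{p^2 v_K(a^p g_1+g_2),\,p\,v_K(g_1)\}$. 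As $u^p$ dominates $u$ when $v_L(u)<0$, we have $v_L(\wp(u))=p\,v_L(u)$, whence $v_L(u)=\min\{p\,v_K(a^p g_1+g_2),\,v_K(g_1)\}$. Substituting into $N=\ceil{-v_L(u)/p^2}$ yields the stated formula.

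The step I expect to be the main obstacle is pinning down the normalisation in the passage from the abstract definition of $\bsym v_V$ (the module resolvent / tuning module) to the concrete colength that produces both the factor $1/p^2$ and the ceiling; in particular one must check that $\mathcal O_L\Xi\subseteq V^\vee\otimes_k\mathcal O_L$ and that the only contribution comes from the single invariant $u$. Secondary technical points are the reduction to the totally ramified connected case, the use of a reduced Artin–Schreier representative for $g_1$ (guaranteeing $p\nmid v_K(g_1)$ and hence the crucial inequality $p\,v_K(g_1)\ne p^2 v_K(a^p g_1+g_2)$ that forbids cancellation in $\wp(u)$), and the well-definedness of the right-hand side under the choice of representatives.
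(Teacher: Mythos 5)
Your proposal is correct and follows essentially the same route as the paper: both identify the tuning module with the rank-two lattice controlled by the single element $u=a\alpha+\beta$ (your $aw_1+w_2$, the paper's $\gamma$), derive the key identity $u^p-u=(a^p-a)\alpha+(a^pg_1+g_2)$, and use $p\nmid v_K(g_1)$ to rule out cancellation before taking the ceiling coming from integrality in $\mathcal O_L$. The only cosmetic differences are that you find the invariants by solving the cocycle condition rather than via the paper's Jordan-adapted basis $(A_iB_j)$ of $L$, and you extract $v_L(u)$ from the dominance $v_L(u^p-u)=pv_L(u)$ rather than from the norm $N_{L/L^\sigma}$.
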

We prove this theorem by direct computation. The \(v\)-function for linear action is computed from \textit{the tuning module}, which is the set of \(G\)-equivariant \(\mathcal O_K\)-linear homomorphisms from the linear part of the coordinate ring of \(V\) to \(\mathcal O_L\). First, we choose a \(K\)-basis of \(L\). In this situation, homomorphisms in the \textit{tuning module} is characterized by the value of \(x_2\), which is second coordinate of \(V\). We determine the sets of the value of \(x_2\) with respect to elements of the tuning module. From this computation, we get an \(\mathcal O_K\)-basis of the tuning module. By the defining formula of \(v\)-function using determinant, we get the formula in the above theorem.

From this formula, we get a counterexample for Yasuda's problem.
\begin{coro}[Corollary \ref{cor:main}]
We keep the notation of the main theorem. Then the value of \(\bsym v_V\) at a \(G\)-extension \(L/K\) is not determined by the ramification filtration of \(G\) associated to \(L/K\).
\end{coro}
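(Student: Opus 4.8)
The plan is to produce two $G$-Galois extensions $L/K$ and $L'/K$ that carry the \emph{same} ramification filtration yet have \emph{different} values of $\bsym v_V$, as computed from the formula of the main theorem. The decisive point is that that formula involves $v_K(a^pg_1+g_2)$, a $k$-linear combination whose coefficient $a^p$ does \emph{not} lie in $\FF_p$: since $a\notin\FF_p$, if $a^p=c\in\FF_p$ then $(a-c)^p=a^p-c^p=a^p-c=0$, forcing $a=c\in\FF_p$, a contradiction, so $a^p\notin\FF_p$. By contrast, the ramification filtration is assembled only from the valuations of the \emph{$\FF_p$-linear} combinations $c_1g_1+c_2g_2$ with $(c_1,c_2)\in\FF_p^2$. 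Thus $a^pg_1+g_2$ sits off the $\FF_p$-lattice spanned by $g_1,g_2$ and is invisible to the filtration; this is exactly the freedom I intend to exploit.

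First I would recall how the filtration is read off from the Artin-Schreier data. Writing $\wp(x)=x^p-x$, the extension corresponds to a two-dimensional $\FF_p$-subspace $W=\langle\bar g_1,\bar g_2\rangle\subset K/\wp(K)$, and its $p+1$ cyclic degree-$p$ subextensions correspond to the lines of $W$. For the line spanned by a reduced representative $g$ with $v_K(g)=-m$ and $\gcd(m,p)=1$, the associated ramification break equals $m$, and the whole filtration $(G_i)$ is determined by the breaks of all the lines. In particular, if every nonzero $\FF_p$-combination $c_1g_1+c_2g_2$ has valuation exactly $-m$ with $\gcd(m,p)=1$, then all $p+1$ breaks equal $m$, the filtration has a single jump, and one obtains $G_i=G$ for $i\le m$ and $G_i=1$ for $i>m$ — a filtration depending on $m$ alone, and in particular the same for any two such extensions.

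With this reduction in hand I would fix $m$ coprime to $p$ and large, concretely $m=p^2+1$, and set, for the first extension, $g_1=t^{-m}$ and $g_2=\lambda t^{-m}$ with $\lambda\in k\setminus(\FF_p\cup\{-a^p\})$; for the second, $g_1'=t^{-m}$ and $g_2'=-a^pt^{-m}+t^{-1}$. In each case the nonzero $\FF_p$-combinations have leading coefficient $c_1+c_2\lambda$, respectively $c_1-c_2a^p$, which vanishes only for $(c_1,c_2)=(0,0)$ because neither $\lambda$ nor $a^p$ lies in $\FF_p$; hence both $W$ and $W'$ are genuinely two-dimensional and both extensions realise the single-jump filtration above, so their ramification filtrations coincide. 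However $v_K(a^pg_1+g_2)=v_K((a^p+\lambda)t^{-m})=-m$, whereas $v_K(a^pg_1'+g_2')=v_K(t^{-1})=-1$. Substituting into the main theorem gives $\bsym v_V(g_1,g_2)=\ceil{-\min\{-m,-pm\}/p^2}=\ceil{m/p}=p+1$ and $\bsym v_V(g_1',g_2')=\ceil{-\min\{-m,-p\}/p^2}=\ceil{m/p^2}=2$, which are unequal. This yields the desired counterexample.

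I expect the second step to be the main obstacle: one must rigorously justify that the ramification filtration is governed solely by the reduced valuations of the $\FF_p$-lines of $W$, and that a single-jump filtration is genuinely independent of the remaining geometry of $W$. This I would ground in the standard ramification theory of elementary abelian $p$-extensions in equal characteristic — identifying the upper breaks with the Artin conductors of the characters of $G$ via Schmid's formula for Artin-Schreier conductors, and using the Hasse-Arf property together with Herbrand's transition, which for a single break makes the upper and lower numberings agree. The remaining points — that the two $W$ are two-dimensional over $\FF_p$ (again using $a^p\notin\FF_p$) and that the two extensions are distinct — are routine verifications.
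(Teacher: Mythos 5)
Your proposal is correct and follows essentially the same route as the paper: both apply the formula of the main theorem to explicit pairs \((g_1,g_2)\) whose nonzero \(\FF_p\)-linear combinations all have the same valuation \(-m\) with \(p\nmid m\) (so the ramification filtration has a single jump at \(m\) and does not distinguish the extensions), while the valuation of \(a^pg_1+g_2\) --- which is not an \(\FF_p\)-combination since \(a^p\notin\FF_p\) --- changes and alters \(\bsym v_V\). The paper takes \(m=p^2-1\) and a one-parameter family \(g_2=ct^{-(p^2-1)}+t^{-1}\), separating the case where \(c\) equals the critical value from the generic case, whereas you take \(m=p^2+1\) and two separate pairs; this is only a cosmetic difference.
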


We prove this by applying the main theorem to a special sequence of pairs \(g_1,g_2\) parameterized by \(c\in k-\FF_p\) such that every \(L_{g_1,g_2}\) has the same ramification filtration. We can see that if \(c=-a^2\) then the value of \(v\)-function at \(L_{g_1,g_2}\) is different from the one of the other case.

This paper is organized as follows. In section three, we recall the lower and upper ramification filtrations and basic facts about them. In section four, we give the definition of the \(v\)-function. In section five, we show the main theorem and its corollary.

\subsection*{Acknowledgements}
I am grateful to Takehiko Yasuda for teaching me and for his valuable comments. Without his help, this paper would not have been possible. This work was partially supported by JSPS KAKENHI Grant Number JP18H01112.
\section{Notation and Convention}
Throughout the paper, \(k\) denotes an algebraically closed field of characteristic \(p>0\) and \(K\) denotes the Laurent power series field \(k((t))\) and \(\mathcal O_K\) denotes the valuation ring of \(K\) with respect to the normalized valuation \(v_K\). 

For a finite group \(G\), a \(G\)\textit{-\'etale} \(K\)-\textit{algebra} \(A\) means an \'etale \(K\)-algebra endowed with \(G\)-action satisfying \(A^G\cong K\) and \(\dim_KA=\sharp G\).

\section{Ramification filtration}
The \textit{ramification filtration} is an invariant used to study the ramification of Galois extension of local field. For details, see \cite{Ser}. Let \(L/K\) be a Galois extension. We denote the Galois group of \(L/K\) by \(G\). Then \(G\) acts on the integral closure \(\mathcal O_L\) of \(\mathcal O_K\) in \(L\).

\textit{The i-th lower ramification group} \(G_i\) of \(G\) is defined by
\[
G_i=\{\sigma\in G\mid v_L(\pi\sigma-\pi)\geq i+1\}
\]
for \(i\geq -1\) where \(\pi\) is an uniformizer of \(\mathcal O_L\). The lower ramification groups give a descending sequence of subgroups of \(G\):
\[G=G_0\supset G_1\supset G_2\supset\cdots\supset G_i\supset G_{i+1}\supset\cdots\]
The sequence is called \text{the lower ramification filtration}.

In general, the lower ramification filtration is not compatible with quotient of group. To fix this problem, we consider \text{the upper ramification group} which is defined as follows. For a real number \(t\geq -1\), we define \(G_t=G_{\ceil t}\). We define a function \(\varphi\) by
\[
\varphi(x)=\int_0^x\frac{dt}{(G_0:G_t)}
\]
where \((G_0:G_t)\) is index of \(G_0/G_t\). Note that \((G_0:G_t)\) means the inverse of the index of \(G_0/G_{-1}\) for \(-1\leq t<0\) as convention. Then \(\varphi\) has its inverse function \(\psi\). We define
\[
G^v=G_{\psi(v)},
\]
which called \textit{the }\(v\)\textit{-th upper ramification group}. As \(\varphi\) is monotonically increasing, the upper ramification groups also gives a descending sequence of subgroups of \(G\):
\[G=G^0\supset G^1\supset G^2\supset\cdots\supset G^i\supset G^{i+1}\supset\cdots\]
This sequence is called \textit{the upper ramification filtration}.
\begin{prop}[Proposition 14, page 74, \cite{Ser}]\label{prop:upper}
The upper ramification filtration is compatible with quotients of the group. Namely, for a normal subgroup \(H\subset G\), we get
\[(G/H)^v=G^vH/H\]
for all \(v\geq -1\).
\end{prop}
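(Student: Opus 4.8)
The plan is to reduce the statement, which is phrased in the upper numbering, to a computation in the lower numbering, where the behaviour under quotients is governed by Herbrand's theorem. Write \(K'=L^H\), so that \(\mathrm{Gal}(L/K')=H\) and \(\mathrm{Gal}(K'/K)=G/H\), and denote by \(\varphi_{L/K},\psi_{L/K}\) (and similarly for \(L/K'\) and \(K'/K\)) the functions introduced above attached to each of the three extensions. By the definition of the upper numbering, \((G/H)^v=(G/H)_{\psi_{K'/K}(v)}\) and \(G^v=G_{\psi_{L/K}(v)}\), so the claim is equivalent to the identity
\[
(G/H)_{\psi_{K'/K}(v)}=G_{\psi_{L/K}(v)}\,H/H.
\]
First I would record that lower ramification is compatible with passage to a subgroup: for \(\sigma\in H\) the defining inequality \(v_L(\sigma\pi-\pi)\geq i+1\) is insensitive to whether \(\sigma\) is regarded as an element of \(G\) or of \(H=\mathrm{Gal}(L/K')\), since both extensions share the field \(L\) and hence the same uniformizer \(\pi\) and valuation \(v_L\). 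Thus \(H_i=H\cap G_i\) for all \(i\).

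The core input is Herbrand's theorem: for the normal subgroup \(H\triangleleft G\) one has, for every real \(u\geq -1\),
\[
G_u\,H/H=(G/H)_{\varphi_{L/K'}(u)}.
\]
To prove this I would compare, for a fixed \(\bar\sigma\in G/H\), the quantity \(i_{K'/K}(\bar\sigma)=v_{K'}(\bar\sigma\pi'-\pi')\) (with \(\pi'\) a uniformizer of \(K'\)) against the numbers \(i_{L/K}(\tau)=v_L(\tau x-x)\) for the lifts \(\tau\) of \(\bar\sigma\) to \(G\), where \(x\) generates \(\mathcal O_L\) over \(\mathcal O_K\). The technical heart is the identity
\[
i_{K'/K}(\bar\sigma)-1=\varphi_{L/K'}\!\Big(\max_{\tau\mapsto\bar\sigma}\big(i_{L/K}(\tau)-1\big)\Big),
\]
obtained by relating \(v_{K'}(\bar\sigma\pi'-\pi')\) to the valuations of the conjugates via a product expansion and recognising the resulting sum of slopes as the integral defining \(\varphi_{L/K'}\). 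Membership \(\bar\sigma\in(G/H)_{w}\) is then equivalent to the existence of a lift \(\tau\in G_u\) with \(\varphi_{L/K'}(u)=w\), which is exactly the asserted equality of subgroups. This averaging/integral identity is the step I expect to be the main obstacle, since it is where the precise shape of \(\varphi\) enters and where care is needed to handle the maximum over a coset and the normalisations of the valuations.

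With Herbrand's theorem in hand, the proof concludes using the transitivity of the functions in the tower \(L\supset K'\supset K\), namely \(\psi_{L/K}=\psi_{L/K'}\circ\psi_{K'/K}\) (equivalently \(\varphi_{L/K}=\varphi_{K'/K}\circ\varphi_{L/K'}\)), which itself follows from the chain rule applied to the defining integrals together with the multiplicativity of ramification indices. Setting \(w=\psi_{K'/K}(v)\) and \(u=\psi_{L/K'}(w)\) in Herbrand's theorem gives
\[
(G/H)_{\psi_{K'/K}(v)}=(G/H)_{\varphi_{L/K'}(u)}=G_u\,H/H,
\]
since \(\varphi_{L/K'}(u)=w\). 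Transitivity yields \(u=\psi_{L/K'}(\psi_{K'/K}(v))=\psi_{L/K}(v)\), and substituting this value of \(u\) gives \((G/H)^v=G_{\psi_{L/K}(v)}H/H=G^vH/H\), as required.
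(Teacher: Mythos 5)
The paper does not prove this proposition at all --- it is quoted directly from Serre's \emph{Local Fields} (Chapter IV, Proposition 14) with only a citation. Your outline correctly reconstructs Serre's own argument, namely Herbrand's theorem \(G_uH/H=(G/H)_{\varphi_{L/K'}(u)}\) combined with the transitivity \(\varphi_{L/K}=\varphi_{K'/K}\circ\varphi_{L/K'}\), so it matches the cited source's proof; the one step you leave unproved, the identity \(i_{K'/K}(\bar\sigma)-1=\varphi_{L/K'}\bigl(\max_{\tau\mapsto\bar\sigma}(i_{L/K}(\tau)-1)\bigr)\), is precisely Serre's Lemma 5, and you correctly flag it as the technical heart rather than glossing over it.
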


\section{The \textit{\textbf{v}}-function}
In this paper, we define the \(v\)-function associated to a linear action. For more detail, see \cite{WY} or \cite{Yas4}. The \(v\)-function is a function from the set of \(G\)-\'etale \(K\)-algebras to \(\frac 1{\sharp G}\ZZ\). A \(G\)-\textit{\'etale} \(K\)-\textit{algebra} is a \(K\)-algebra endowed with \(G\)-action satisfying the stable subalgebra of \(G\) is isomorphic to \(K\).

Let \(V\) be an \(n\)-dimensional representation of a finite group \(G\subset\GL_n(k)\) over \(\mathcal O_K\). Note that the following definition can be applied for representaions of subsets of \(\GL_n(\mathcal O_K)\). We denote the coordinate ring of \(V\) by \(\mathcal O_K[x_1,\ldots,x_n]\). Then the linear part \(M=\sum_{i=1}^n\mathcal O_Kx_i\) of \(\mathcal O_K[x_1,\ldots,x_n]\) is the free \(\mathcal O_K\)-module of degree \(n\). For a \(G\)-\'etal \(K\)-algebra \(A\), We define its \textit{tuning module} \(\Xi_{A/K}^V\) by the set of all \(G\)-equivaliant \(\mathcal O_K\)-linear homomorphisms from \(M\) to \(\mathcal O_A\) where \(\mathcal O_A\) is the integral closure of \(\mathcal O_K\) in \(A\):
\[
\Xi_{A/K}^V=\Hom_{\mathcal O_K}^G(M,\mathcal O_A)
\]
Note that \(\Xi_{A/K}^V\) is also a free \(\mathcal O_K\)-module of degree \(n\). 
 The \(G\)-\'etale \(K\)-algebra \(A\) is written as
\[
A=L_1\oplus L_2\oplus\cdots\oplus L_m
\]
where \(L_i\) are copies of a Galois extension \(L/K\). The \(v\)-function \(\bsym v_V\) is defined by
\[
\bsym v_V(A)=\frac{1}{\sharp G}\mathrm{length}_{\mathcal O_K}\frac{\Hom_{\mathcal O_K}(M,\mathcal O_A)}{\mathcal O_A\cdot\Xi_{A/K}^V}
\]
in \cite[Definition 5.4]{Yas4}. Let \((\varphi_i)_{i=1}^n\subset\Xi_{A/K}^V\) be a \(\mathcal O_K\)-basis of \(\Xi_{A/K}^V\). Then the \(v_V\) is computed by
\begin{align}\label{form}
    \bsym v_V(A)=\frac 1{\sharp G}v_L(\det(\varphi_i(x_j))).
\end{align}

\section{Computation of \textit{\textbf v}-function}
We consider a subgroup \(G\) of \(\SL _2(k)\) generated by
\[\sigma=\begin{bmatrix}
1&1\\
0&1
\end{bmatrix},\ \tau=\begin{bmatrix}
1&a\\
0&1
\end{bmatrix}\]
where \(a\in k\backslash\FF_p\).

We denote the subset
\[
\bigoplus_{p\nmid j,\ j>0}kt^{-j}\subset K
\]
by \(J\). Note that \(J\) is a set of representatives of \(K/\mathcal P(K)\). We put
\[
J^{(2)}=\{(g_1,g_2)\in J^2\mid g_1\ne 0,\ g_2\not\in\FF_pg_1\}.
\]
Let \(L/K\) be a \(G\)-extension. Since the fixed fields \(L^\sigma,\ L^\tau\) are \(p\)-cyclic extensions, there exist uniquely \(\alpha,\ \beta\in L\) such that the following conditions holds.
\begin{itemize}
\item \(L^\sigma=K[\alpha]\) and \(L^\tau=K[\beta]\).
\item \(\alpha^p-\alpha,\ \beta^p-\beta\in J\).
\item \(\alpha\cdot\tau=\alpha+1,\ \beta\cdot\sigma=\beta+1\).
\end{itemize}
We denote \(\alpha^p-\alpha,\ \beta^p-\beta\) by \(g_1,\ g_2\), respectively. Then \((g_1,g_2)\in J^2\). In this case, we denote \(L\) by \(L_{g_1,g_2}\).

\begin{prop}\label{prop:paramExt}
We have the one-to-one correspondence:
\[\begin{array}{ccc}
\{G\text{-extensions }L/K\}&\leftrightarrow&J^{(2)}\\
 L_{g_1,g_2}&\mapsto&(g_1,g_2)
\end{array}\]
\end{prop}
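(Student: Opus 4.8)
The plan is to exhibit the correspondence as a pair of mutually inverse maps and to read off the two defining inequalities of \(J^{(2)}\) from Artin--Schreier theory. Since \(G\cong(\ZZ/p\ZZ)^2\) with the distinguished generators \(\sigma,\tau\), a \(G\)-extension is precisely an elementary abelian \(p\)-extension of \(K\) together with an identification of its Galois group with \(G\). The whole statement is therefore governed by Artin--Schreier theory, with the matrix parameter \(a\) playing no role here; it will enter only in the later \(v\)-function computation.

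First I would treat the forward map \(L_{g_1,g_2}\mapsto(g_1,g_2)\). Given a \(G\)-extension \(L/K\), the subgroups \(\langle\sigma\rangle,\langle\tau\rangle\) are normal, so the fixed fields \(L^\sigma,L^\tau\) are \(p\)-cyclic over \(K\) with Galois groups generated by the images of \(\tau,\sigma\) respectively. By Artin--Schreier theory \(L^\sigma=K[\alpha]\) and \(L^\tau=K[\beta]\) for some \(\alpha,\beta\), and since the excerpt records that \(J\) is a set of representatives of \(K/\mathcal P(K)\) I may normalize \(\alpha^p-\alpha,\ \beta^p-\beta\) to lie in \(J\). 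Rescaling \(\alpha\) by the element of \(\FF_p^\times\) that makes \(\tau\) act by \(\alpha\mapsto\alpha+1\) (and likewise \(\beta\) for \(\sigma\)) uses that \(J\) is an \(\FF_p\)-stable subspace and pins down \(g_1=\alpha^p-\alpha\) and \(g_2=\beta^p-\beta\) uniquely in \(J\); this is the point at which the normalization is shown to be well defined. I then check \((g_1,g_2)\in J^{(2)}\): one has \(g_1\ne 0\) because \(L^\sigma\supsetneq K\), and \(g_2\notin\FF_pg_1\) because \(L^\sigma\ne L^\tau\) (if they coincided, their common value would be fixed by \(\langle\sigma,\tau\rangle=G\), hence equal to \(K\)). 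The translation of ``\(L^\sigma\ne L^\tau\)'' into ``\(g_2\notin\FF_pg_1\)'' uses that two elements of \(J\) congruent modulo \(\mathcal P(K)\) are equal, so that \(\FF_p\)-dependence of the classes is the same as literal membership \(g_2\in\FF_pg_1\) inside \(J\).

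Next I would construct the inverse. Given \((g_1,g_2)\in J^{(2)}\), choose roots \(\alpha,\beta\) of \(X^p-X-g_1\) and \(X^p-X-g_2\) and set \(L=K[\alpha,\beta]\). Because \(g_1\) and \(g_2\) represent \(\FF_p\)-independent classes in \(K/\mathcal P(K)\) (again using that distinct elements of \(J\) are inequivalent), Artin--Schreier theory gives \([L:K]=p^2\) with Galois group isomorphic to \((\ZZ/p\ZZ)^2\); I identify this group with \(G\) by declaring \(\tau\) to fix \(\beta\) and send \(\alpha\mapsto\alpha+1\), and \(\sigma\) to fix \(\alpha\) and send \(\beta\mapsto\beta+1\). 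Then \(L^\sigma=K[\alpha]\), \(L^\tau=K[\beta]\), and the three normalizing conditions of the excerpt hold by construction, so this \(L\) is a \(G\)-extension whose forward image is exactly \((g_1,g_2)\). Conversely, feeding the \(\alpha,\beta\) produced by the forward map back into this construction returns the original \(L\), so the two maps are mutually inverse.

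The routine parts—existence of Artin--Schreier generators and the degree count—are standard, so the only real care is needed in the bookkeeping of the two directions: matching which of \(\sigma,\tau\) fixes which generator, and verifying that the \(\FF_p^\times\)-rescaling that normalizes the action keeps the Artin--Schreier element inside \(J\) and makes \(g_1,g_2\) unique. I expect the main obstacle to be this last identification, namely proving cleanly that \(\FF_p\)-linear independence of the classes of \(g_1,g_2\) modulo \(\mathcal P(K)\) is equivalent to the combinatorial condition \(g_2\notin\FF_pg_1\) defining \(J^{(2)}\), and that this independence is in turn equivalent to \(L/K\) being a genuine \((\ZZ/p\ZZ)^2\)-extension rather than a \(p\)-cyclic one.
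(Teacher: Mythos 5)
Your proposal is correct and follows essentially the same route as the paper: the only substantive check is that the forward map lands in \(J^{(2)}\), and your argument (\(g_1\ne 0\) since \(L^\sigma\supsetneq K\), and \(g_2\notin\FF_pg_1\) since otherwise \(L^\sigma=L^\tau\) by Artin--Schreier theory) matches the paper's, which exhibits \(\beta=s\alpha+t\in L^\sigma\) directly. You additionally write out the normalization of \(\alpha,\beta\) and the inverse construction, which the paper leaves implicit in the discussion preceding the proposition.
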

\begin{proof}
We show that \((g_1,g_2)\in J^{(2)}\), which equivalent to that \(g_1\ne 0,\ g_2\not\in\FF_p g_1\), for a \(G\)-extension \(L_{g_1,g_2}\). If \(g_1=0\), then \(L^\sigma=K\), that contradicts the fact that \(L/K\) is \(G\)-extension. Thus we get \(g_1\ne 0\). If \(g_2\in\FF_pg_1\), then there exists \(0\ne s\in\FF_p\) such that \(g_2=sg_1\). Since \(s\in\FF_p\), we have \(s^p=s\). Hence
\[
(s\alpha)^p-s\alpha=s(\alpha^p-\alpha)=sg_1=g_2.
\]
Therefore, \(s\alpha\) and \(\beta\) are solutions of the equation \(x^p-x=g_2\). Thus \(\beta=s\alpha+t\) for some \(t\in\FF_p\), and hence \(\beta\in L^\sigma\).
That contradicts to that \(L/K\) is \(G\)-extension. Therefore, we have \(g_2\not\in\FF_pg_1\).
\end{proof}

We denote the \(G\)-extension field \(L\) of \(K\) corresponding \((g_1,g_2)\in J^{(2)}\) by \(L_{g_1,g_2}\).

Let \(V\) be the natural representation of \(G\) of rank two over \(\mathcal O_K\). We
denote \(\bsym v_V(L_{g_1,g_2})\) by \(\bsym v_V(g_1,g_2)\). Let \(\mathcal O_K[x_1,x_2]\) be the coordinate ring of \(V\). 

\begin{prop}\label{prop:isomTM}
The tuning module \(\Xi_{L/K}^V\) for a \(G\)-extension field \(L\) of \(K\) is isomorphic to
\[
\Theta_L:=\{m\in\mathcal O_L\mid 
\ m(\sigma-1)^2=0,\ m(\tau-1)=am(\sigma-1)\}
\]
as an \(\mathcal O_K\)-module by the following maps:
\begin{gather*}
\Xi_{L/K}^V\ni\varphi\mapsto\varphi(x_2)\in\Theta_L\\
\Theta_L\ni m\mapsto(m(\sigma-1))\varphi_1+m\varphi_2\in\Xi_{L/K}^V
\end{gather*}
where \(\varphi_1,\varphi_2\) are \(\mathcal O_K\)-linear homomorphisms from the linear part of \(\mathcal O_K[x_1,x_2]\) to \(O_L\) defined by
\[
\varphi_i(x_j)=\begin{cases}
1&(i=j)\\
0&(i\ne j)
\end{cases}
\]
\end{prop}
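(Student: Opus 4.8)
The plan is to use that an $\mathcal O_K$-linear map $\varphi\colon M\to\mathcal O_L$ is determined by the pair $\bigl(\varphi(x_1),\varphi(x_2)\bigr)\in\mathcal O_L^2$, to write out what $G$-equivariance imposes on this pair, and to see that the resulting system is equivalent to recording only $m=\varphi(x_2)$ together with the two relations defining $\Theta_L$. First I would make the action of $G$ on the free module $M=\mathcal O_Kx_1\oplus\mathcal O_Kx_2$ explicit. Reading $(x_1,x_2)$ as a row vector multiplied on the right by the defining matrices gives $x_1\cdot\sigma=x_1,\ x_2\cdot\sigma=x_1+x_2$ and $x_1\cdot\tau=x_1,\ x_2\cdot\tau=ax_1+x_2$; in particular $x_1$ is $G$-fixed. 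Because $\sigma,\tau$ generate $G$ and the equivariance relation $\varphi(x\cdot g)=\varphi(x)\cdot g$ is multiplicative in $g$ and stable under inverses, it suffices to impose equivariance for $g=\sigma$ and $g=\tau$ on the two basis elements.

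Write $u=\varphi(x_1)$ and $m=\varphi(x_2)$. Expanding the four conditions, equivariance at $\sigma$ yields $u(\sigma-1)=0$ (from $x_1$) and $m(\sigma-1)=u$ (from $x_2$), while equivariance at $\tau$ yields $u(\tau-1)=0$ (from $x_1$) and $m(\tau-1)=au$ (from $x_2$). The relation $m(\sigma-1)=u$ shows that $u$ is forced to equal $m(\sigma-1)$, so $\varphi$ is indeed determined by $m$ alone, and it is recovered as $(m(\sigma-1))\varphi_1+m\varphi_2$. Substituting $u=m(\sigma-1)$ turns $u(\sigma-1)=0$ into $m(\sigma-1)^2=0$ and turns $m(\tau-1)=au$ into $m(\tau-1)=am(\sigma-1)$, which are precisely the two relations cutting out $\Theta_L$. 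Since every $\varphi\in\Xi_{L/K}^V$ takes values in $\mathcal O_L$, this shows the assignment $\varphi\mapsto\varphi(x_2)$ is a well-defined $\mathcal O_K$-linear map into $\Theta_L$.

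For the converse I would start from $m\in\Theta_L$, set $u=m(\sigma-1)\in\mathcal O_L$, define $\varphi=(m(\sigma-1))\varphi_1+m\varphi_2$, and verify the same four equivariance relations. Three of them hold by construction or by the defining relations of $\Theta_L$; the only one that is not immediate is $u(\tau-1)=0$. This is the step I expect to be the main point to get right: it is not an independent constraint but a consequence of the other two, and it is exactly here that one uses $G\cong(\ZZ/p\ZZ)^2$ being abelian, so that $(\sigma-1)$ and $(\tau-1)$ commute as operators on $\mathcal O_L$. Indeed $u(\tau-1)=m(\sigma-1)(\tau-1)=m(\tau-1)(\sigma-1)=a\,m(\sigma-1)^2=0$, using $m(\tau-1)=am(\sigma-1)$ and $m(\sigma-1)^2=0$. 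Hence $\varphi\in\Xi_{L/K}^V$. Finally, the two assignments are visibly $\mathcal O_K$-linear and mutually inverse, since a map in $\Xi_{L/K}^V$ is pinned down by $\bigl(u,m\bigr)$ with $u=m(\sigma-1)$ forced; this proves the stated isomorphism $\Xi_{L/K}^V\cong\Theta_L$.
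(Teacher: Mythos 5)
Your proposal is correct and follows essentially the same route as the paper: both reduce equivariance to the generators $\sigma,\tau$ acting on the basis $x_1,x_2$, observe that $\varphi(x_1)$ is forced to equal $\varphi(x_2)(\sigma-1)$, and verify the converse by the same computation, including the key step $m(\sigma-1)(\tau-1)=m(\tau-1)(\sigma-1)=am(\sigma-1)^2=0$ (which the paper uses implicitly and you rightly flag as relying on $G$ being abelian).
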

\begin{proof}
Let \(\varphi\in\Xi_{L/K}^V\) and let \(m=\varphi(x_2)\). Since \(\varphi\) is \(G\)-equivariant, we get
\begin{gather*}
\begin{split}
m(\sigma-1)&=\varphi(x_2(\sigma-1))\\
&=\varphi(x_1),
\end{split}\\
\begin{split}
m(\sigma-1)^2&=\varphi(x_1(\sigma-1))\\
&=\varphi(0)=0,
\end{split}
\end{gather*}
and
\begin{align*}
m(\tau-1)&=\varphi(x_2(\tau-1))\\
&=\varphi(ax_1)\\
&=am(\sigma-1).
\end{align*}
Thus \(m\in\Theta_L\).

On the other hand, for \(m\in\Theta_L\), we define \(\varphi=(m(\sigma-1))\varphi_1+m\varphi_2\). Since \(m(\sigma-1)^2=0\), we have
\begin{align*}
\varphi(x_1)\sigma&=m(\sigma-1)\sigma\\
&=m((\sigma-1)^2+(\sigma-1))\\
&=m(\sigma-1)\\
&=\varphi(x_1)=\varphi(x_1\sigma)
\end{align*}
and
\begin{align*}
\varphi(x_2)\sigma&=m\sigma\\
&=m(\sigma-1)+m\\
&=\varphi(x_1+x_2)=\varphi(x_2\sigma).
\end{align*}
Since \(m(\tau-1)=am(\sigma-1)\), we have
\begin{align*}
\varphi(x_1)\tau&=m(\sigma-1)\tau\\
&=m((\sigma-1)(\tau-1)+(\sigma-1))\\
&=am(\sigma-1)^2+m(\sigma-1)\\
&=m(\sigma-1)\\
&=\varphi(x_1)=\varphi(x_1\sigma)
\end{align*}
and
\begin{align*}
\varphi(x_2)\tau&=m\tau\\
&=m(\tau-1)+m\\
&=am(\sigma-1)+m\\
&=\varphi(ax_1+x_2)=\varphi(x_2\tau).
\end{align*}
These equations show that \(\varphi\) is \(G\)-equivariant because \(G\) is generated by \(\sigma, \tau\) and \(x_1,x_2\) forms \(\mathcal O_K\)-basis of the linear part of \(\mathcal O_K[x_1,x_2]\). Therefore, \(\varphi\in\Xi_{L/K}^V\).

As above, we have two \(\mathcal O_K\)-linear homomorphisms
\begin{gather*}
\Xi_{L/K}^V\ni\varphi\mapsto\varphi(x_2)\in\Theta_L,\\
\Theta_L\ni m\mapsto(m(\sigma-1))\varphi_1+m\varphi_2\in\Xi_{L/K}^V.
\end{gather*}
These maps are inverse of each other.
\end{proof}
\begin{thm}\label{thm:main}
For a subgroup \(G\subset\SL_2(k)\) generated by
\[\sigma=\begin{bmatrix}
1&1\\
0&1
\end{bmatrix},\ \tau=\begin{bmatrix}
1&a\\
0&1
\end{bmatrix}\quad(a\in k\backslash\FF_p),\]
we consider the \(G\)-representation \(V=\AA_{\mathcal O_K}^2\). For \((g_1,g_2)\in J^{(2)}\), we get
\[
\bsym v_{V}(g_1,g_2)=\ceil{-\frac{\min\{v_K(g_1),pv_K(f)\}}{p^2}}
\]
where \(f=a^pg_1+g_2\).
\end{thm}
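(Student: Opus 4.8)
The plan is to collapse the determinant in (\ref{form}) to a single valuation over $K$ and then to evaluate that valuation by an explicit best-approximation argument in the Artin--Schreier tower $K\subset L^{\sigma}\subset L$. Write $D=\sigma-1$ and $E=\tau-1$ for the two difference operators and set $\gamma:=a\alpha+\beta$. From $\sigma\alpha=\alpha,\ \tau\alpha=\alpha+1,\ \sigma\beta=\beta+1,\ \tau\beta=\beta$ one gets $D\gamma=1$, $E\gamma=a$, and, since $\wp(z):=z^{p}-z$ is additive,
\[
\gamma^{p}-\gamma=(a^{p}-a)\alpha+f,\qquad f=a^{p}g_{1}+g_{2}.
\]
I abbreviate $\mu:=-v_{K}(g_{1})$ and $\nu:=-v_{K}(f)$; both are prime to $p$ because $g_{1},f\in J$, and $L/K$ is totally ramified of degree $p^{2}$, so $v_{L}|_{K}=p^{2}v_{K}$ and $v_{L^{\sigma}}(\alpha)=-\mu$.

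First I would determine $\Theta_{L}$. Since $D$ is $L^{\sigma}$-linear on $L=\bigoplus_{j}L^{\sigma}\beta^{j}$ with $D\beta=1$, one has $\ker D^{2}=L^{\sigma}\oplus L^{\sigma}\beta$; imposing $Em=aDm$ on $m=f_{0}+f_{1}\beta$ forces $f_{1}\in(L^{\sigma})^{\tau}=K$ and $Ef_{0}=af_{1}$, whence $f_{0}\in af_{1}\alpha+K$. Thus the rational solution space is $W=K\cdot 1\oplus K\cdot\gamma$ and $\Theta_{L}=W\cap\mathcal O_{L}$. By Proposition \ref{prop:isomTM} a basis $m_{1},m_{2}$ of $\Theta_{L}$ gives $\varphi_{i}(x_{1})=Dm_{i},\ \varphi_{i}(x_{2})=m_{i}$; writing $m_{i}=c_{i}+f_{i}\gamma$ and using $D\gamma=1$,
\[
\det(\varphi_{i}(x_{j}))=Dm_{1}\cdot m_{2}-m_{1}\cdot Dm_{2}=f_{1}c_{2}-c_{1}f_{2}\in K.
\]
Since $1\in\Theta_{L}$, I take $m_{1}=1$, so $\det=-f_{2}$ and (\ref{form}) gives $\bsym v_{V}(g_{1},g_{2})=\tfrac{1}{p^{2}}v_{L}(f_{2})=v_{K}(f_{2})$. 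The projection $c+f_{1}\gamma\mapsto f_{1}$ exhibits $0\to\mathcal O_{K}\to\Theta_{L}\to\mathfrak a\to0$ with $\mathfrak a$ a fractional ideal, so $m_{2}$ may be chosen with $v_{K}(f_{2})$ minimal; the problem becomes computing $d:=\min\{v_{K}(f_{1}):f_{1}\gamma\in\mathcal O_{L}+K\}$.

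The crux is the value of $\delta:=\max_{c\in K}v_{L}(\gamma-c)$, after which $f_{1}\gamma\in\mathcal O_{L}+K\iff v_{L}(f_{1})+\delta\ge0$ gives $d=\ceil{-\delta/p^{2}}$. Assuming $\delta<0$ (true, as $\gamma$ has poles), applying $\wp$ multiplies valuations by $p$, so $p\delta=\max_{c}v_{L}\big((a^{p}-a)\alpha+(f-\wp(c))\big)$; as the argument lies in $L^{\sigma}$ this is $\delta=\max_{c\in K}v_{L^{\sigma}}\big((a^{p}-a)\alpha+f-\wp(c)\big)$. Now $(a^{p}-a)\alpha$ has $v_{L^{\sigma}}$-valuation $-\mu$, prime to $p$, while every element of $K$ has $v_{L^{\sigma}}$-valuation in $p\ZZ$; these can never coincide, so there is no cancellation and the valuation equals $\min\{-\mu,\,p\,v_{K}(f-\wp(c))\}$. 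Maximising over $c$ reduces everything to the single fact $\max_{c\in K}v_{K}(f-\wp(c))=v_{K}(f)=-\nu$: since $f\in J$ is the reduced Artin--Schreier representative, cancelling its leading term $t^{-\nu}$ (with $p\nmid\nu$) would force a strictly deeper pole $t^{-p\nu}$ coming from $c^{p}$, and iterating only worsens the pole. This yields $\delta=\min\{-\mu,-p\nu\}=-\max\{\mu,p\nu\}$.

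The main obstacle is exactly this last paragraph: the no-cancellation dichotomy modulo $p$ together with the irreducibility of the reduced representative $f$. It is what makes $\delta$, and hence $\bsym v_{V}$, depend on the finer datum $\nu=-v_{K}(a^{p}g_{1}+g_{2})$ rather than merely on $\mu=-v_{K}(g_{1})$ and $-v_{K}(g_{2})$, which is precisely the phenomenon exploited in Corollary \ref{cor:main}. I would also record the degenerate case $f=0$ separately: there $\max_{c}v_{K}(-\wp(c))=+\infty$ (take $c\in\FF_{p}$), so $\delta=-\mu$, matching the convention $pv_{K}(f)=+\infty$. Assembling the pieces,
\[
\bsym v_{V}(g_{1},g_{2})=d=\ceil{-\delta/p^{2}}=\ceil{-\frac{\min\{v_{K}(g_{1}),\,pv_{K}(f)\}}{p^{2}}}.
\]
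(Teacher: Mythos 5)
Your proposal is correct and follows essentially the same route as the paper: both identify the rational solution space of the tuning-module conditions as \(K\oplus K\gamma\) with \(\gamma=a\alpha+\beta\), reduce the determinant in (\ref{form}) to the valuation of the coefficient of \(\gamma\), and compute that valuation from \(\gamma^p-\gamma=(a^p-a)\alpha+f\) via the no-cancellation argument comparing valuations modulo \(p\). Your best-approximation quantity \(\delta\) and the explicit treatment of \(f=0\) are minor repackagings of the paper's observation that \(v_L(\gamma)\notin p^2\ZZ\), which makes the integrality condition split coordinatewise.
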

\begin{proof}
We put \(L=L_{g_1,g_2}\) for \((g_1,g_2)\in J^{(2)}\). Let \(\alpha,\beta\in L\) be generators of \(L^\sigma, L^\tau\) over \(K\) satisfying \(\alpha\tau=\alpha+1,\ \beta\sigma=\beta+1,\ \alpha^p-\alpha=g_1\), and \(\beta^p-\beta=g_2\). The fixed field \(L^\sigma\) is naturally regarded as a cyclic representation of \(\angles{\tau}\) over \(K\). Since
\[
\alpha^i(\tau-1)=(\alpha+1)^i-\alpha^i=i\alpha^{i-1}+\sum_{l=0}^{i-2}\binom il\alpha^l
\]
for \(i\geq 1\), \(\alpha^i(\tau-1)^j\) is a polynomial of \(\alpha\) degree \(i-j\). In particular, \(\alpha^{p-1}(\tau-1)^{p-1}\ne 0\). Hence the representation matrix of \(\tau\) on \(L^\sigma\) has only one Jordan block. Hence, we can choose \(K\)-basis \((A_i)_{i=0}^{p-1}\) of \(L^\sigma\) which satisfies
\[A_i(\tau-1)=A_{i-1}\quad(1\leq i\leq p-1)\]
and \(A_1=\alpha\). Similarly, we can choose \(K\)-basis \((B_i)_{i=0}^{p-1}\) of \(L^\tau\) which satisfies
\[
B_i(\sigma-1)=B_{i-1}\quad(1\leq i\leq p-1)
\]
and \(B_1=\beta\). Then \((A_iB_j)_{i,j=0}^{p-1}\) is a \(K\)-basis of \(L\).

We compute a \(\mathcal O_K\)-basis of \(\Theta_L\), which defined in Proposition \ref{prop:isomTM}. Let \(m\in\Theta_L\). We can write \(m=\sum_{i=0}^{p-1}\sum_{j=0}^{p-1}c_{ij}A_iB_j\) with \(c_{ij}\in K\). Since \(A_i\in L^\sigma\), we have
\[(A_iB_j)(\sigma-1)=(A_i\sigma)(B_j\sigma)-A_iB_j=A_i(B_j(\sigma-1))=A_iB_{j-1}.\]
Similarly, we have
\[(A_iB_j)(\tau-1)=A_{i-1}B_j.\]
Then we get
\begin{align*}
m(\sigma-1)^2&=\sum_{i=0}^{p-1}\sum_{j=2}^{p-1}c_{ij}A_iB_{j-2}\\
&=\sum_{i=0}^{p-1}\sum_{j=0}^{p-3}c_{i,j+2}A_iB_j
\end{align*}
Since \(m\in\Theta_L\), \(m\) satisfies \(m(\sigma-1)^2=0\). Hence we get
\[
m=\sum_{i=0}^{p-1}(c_{i0}A_i+c_{i1}A_iB_1).
\]
Moreover \(m\) satisfies \(m(\tau-1)=am(\sigma-1)\). We have
\begin{gather*}
\begin{split}
m(\tau-1)&=\sum_{i=0}^{p-1}(c_{i0}(A_i(\tau-1))+c_{i1}(A_iB_1(\tau-1)))\\
&=\sum_{i=1}^{p-1}(c_{i0}A_{i-1}+c_{i1}A_{i-1}B_1)\\
&=\sum_{i=0}^{p-2}(c_{i+1,0}A_i+c_{i+1,1}A_iB_1),
\end{split}\\
\begin{split}
am(\sigma-1)&=a\sum_{i=0}^{p-1}(c_{i0}(A_i(\sigma-1))+c_{i1}(A_iB_1(\sigma-1)))\\
&=\sum_{i=0}^{p-1}ac_{i1}A_iB_0.\\
\end{split}\\
\end{gather*}
Combining these, we get
\[
ac_{i1}=c_{i+1,0}
\]
for \(0\leq i\leq p-2\) and
\[
c_{i1}=0
\]
for \(1\leq i\leq p-1\). These implies that
\[m=c_{00}+c_{i1}(a\alpha+\beta).\]
We put \(\gamma=a\alpha+\beta\) and \(s=v_L(\gamma)\). We have
\begin{align*}
N_{L/L^\sigma}(\gamma)&=\prod_{i=0}^{p-1}\gamma\sigma^i=\prod_{i=0}^{p-1}(\gamma+i)\\
&=\gamma^p-\gamma=(a^p\alpha^p+\beta^p)-(a\alpha+\beta)\\
&=a^p(\alpha+g_1)-a\alpha+(\beta^p-\beta)\\
&=(a^p-a)\alpha+a^pg_1+g_2=(a^p-a)\alpha+f.
\end{align*}
Thus
\[
s=-v_{L^\sigma}((a^p-a)\alpha+f).
\]
Now
\[
v_{L^\sigma}(\alpha)=v_K(N_{L^\sigma/K}(\alpha))=v_K(g_1),
\]
which is not divided by \(p\). Since
\[
v_{L^\sigma}(f)=pv_K(f)\in p\ZZ,
\]
we get \(v_{L^\sigma}((a^p-a)\alpha)\ne v_{L^\sigma}(f)\). Hence
\begin{align*}
s&=-v_{L^\sigma}((a^p-a)\alpha+f)\\
&=-\min\{v_{L^\sigma}((a^p-a)\alpha),v_{L^\sigma}(f)\}\\
&=-\min\{v_K(g_1),pv_K(f)\}.
\end{align*}
Note that \(s\not\in p^2\ZZ\). Therefore,
\[
m\in\mathcal O_L\Leftrightarrow c_{00}\in\mathcal O_K\text{ and }c_{i1}\in t^{\ceil{\frac s{p^2}}}\mathcal O_K.
\]
Thus \(1,t^{\ceil{\frac s{p^2}}}\gamma\) is an \(\mathcal O_K\)-basis of \(\Theta_L\). This corresponds to an \(\mathcal O_K\)-basis 
\[
\varphi_2,\ t^{\ceil{\frac s{p^2}}}\gamma\varphi_1+t^{\ceil{\frac s{p^2}}}\varphi_2
\]
of the tuning module \(\Xi_{L/K}^V\).

By formula \ref{form}, we get
\[
\bsym v_V(L)=\frac 1{p^2}v_L\left(\det\begin{bmatrix}
0&1\\
t^{\ceil{\frac s{p^2}}}&t^{\ceil{\frac s{p^2}}}\gamma
\end{bmatrix}\right)=\ceil{\frac s{p^2}}.
\]
As \(s=-\min\{v_K(g_1),pv_K(f)\}\), we get
\[\bsym v_V(g_1,g_2)=\ceil{-\frac{\min\{v_K(g_1),pv_K(f)\}}{p^2}}.\]
\end{proof}
\begin{coro}\label{cor:main}
We keep the notation of this section. Then the value of \(\bsym v_V\) at a \(G\)-extension \(L/K\) is not determined by the ramification filtration of \(G\) associated to \(L/K\).
\end{coro}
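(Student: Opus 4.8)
The plan is to exhibit a one-parameter family of $G$-extensions, all sharing the same ramification filtration, on which $\bsym v_V$ takes at least two distinct values. The conceptual point guiding the construction is that the ramification filtration of $G$ is recorded by the breaks of the $p+1$ degree-$p$ subextensions of $L/K$, and these breaks are the pole orders of the Artin--Schreier classes $ug_1+wg_2$ with $(u:w)\in\PP^1(\FF_p)$; that is, the filtration only sees $\FF_p$-linear combinations of $g_1,g_2$. By contrast, Theorem \ref{thm:main} shows that $\bsym v_V$ is governed by $v_K(f)$ with $f=a^pg_1+g_2$, a genuinely $k$-linear combination whose coefficient $a^p$ lies outside $\FF_p$ (since $a\notin\FF_p$). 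Thus if I move $g_2$ so that $f$ degenerates while no nonzero $\FF_p$-combination of $g_1,g_2$ degenerates, the filtration stays frozen but $\bsym v_V$ jumps.

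Concretely, I would fix $g_1=t^{-(p+1)}\in J$ and set $g_2=cg_1$ for a parameter $c\in k\setminus\FF_p$, so that $(g_1,cg_1)\in J^{(2)}$ (indeed $g_1\ne0$, and $cg_1\notin\FF_pg_1$ precisely because $c\notin\FF_p$). By Proposition \ref{prop:paramExt} these give pairwise distinct $G$-extensions $L_{g_1,cg_1}$. First I would check that every member of this family has the same ramification filtration. The $p+1$ degree-$p$ subextensions are $K[u\alpha+w\beta]$ for $(u:w)\in\PP^1(\FF_p)$, with Artin--Schreier class $ug_1+wg_2=(u+wc)g_1$. Since $c\notin\FF_p$ and $(u,w)\ne(0,0)$, the scalar $u+wc$ is nonzero, so each such class has pole order exactly $m:=-v_K(g_1)=p+1$, independent of $c$. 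Using the compatibility of the upper filtration with quotients (Proposition \ref{prop:upper}), the break of $L^H/K$ is the value at which $(G/H)^v=G^vH/H$ becomes trivial, i.e. at which $G^v\subseteq H$. Knowing all $p+1$ breaks equal $m$ then forces $G^v=G$ for $v\le m$ (otherwise $G^v$ would be contained in some order-$p$ subgroup, producing a break below $m$), while $G^v\subseteq\bigcap_H H=1$ for $v>m$. Hence the upper, and therefore the lower, ramification filtration is the uniform single-jump filtration determined only by $m$, the same for every $c$.

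The remaining step is to evaluate $\bsym v_V$ by Theorem \ref{thm:main} and observe it is not constant. Here $f=a^pg_1+g_2=(a^p+c)g_1$. For the distinguished value $c=-a^p$ (which is admissible, as $-a^p\in k\setminus\FF_p$) we have $f=0$, so $v_K(f)=+\infty$, whence $\min\{v_K(g_1),pv_K(f)\}=v_K(g_1)$ and
\[
\bsym v_V(g_1,-a^pg_1)=\ceil{-\frac{v_K(g_1)}{p^2}}=\ceil{\frac{p+1}{p^2}}=1.
\]
For any other admissible $c$ (i.e. $c\in k\setminus\FF_p$ with $c\ne-a^p$, which exists since $k\setminus\FF_p$ is infinite) we have $f=(a^p+c)g_1\ne0$, so $v_K(f)=v_K(g_1)=-(p+1)$ and
\[
\bsym v_V(g_1,cg_1)=\ceil{-\frac{\min\{-(p+1),\,-p(p+1)\}}{p^2}}=\ceil{\frac{p+1}{p}}=2.
\]
Since these two extensions share the same ramification filtration yet satisfy $\bsym v_V=1$ and $\bsym v_V=2$ respectively, $\bsym v_V$ is not determined by the ramification filtration, proving the corollary.

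I anticipate the main obstacle to be the middle step: rigorously certifying that the ramification filtration really is constant across the family, rather than merely the collection of subextension breaks. The delicate part is passing from ``all $p+1$ degree-$p$ subextensions have break $m$'' to ``the filtration $(G^v)$ of $G$ itself is the uniform one,'' which I would handle through Proposition \ref{prop:upper} together with the observation that $\bigcap_H H=1$ over the order-$p$ subgroups $H$; this is where one must be careful not to conflate information about the quotients $G/H$ with information about $G$ itself. The evaluations of $\bsym v_V$, by contrast, are immediate substitutions into the main theorem.
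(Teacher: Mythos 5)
Your proof is correct and takes essentially the same route as the paper's: both exhibit a one-parameter family (the paper takes \(g_1=t^{-(p^2-1)}\), \(g_2=ct^{-(p^2-1)}+t^{-1}\); you take \(g_1=t^{-(p+1)}\), \(g_2=cg_1\)) in which every nonzero element of \(\FF_pg_1+\FF_pg_2\) has the same pole order, so the ramification filtration is constant while Theorem \ref{thm:main} makes \(\bsym v_V\) jump at one special value of \(c\) --- and your justification that equal breaks of all \(p+1\) degree-\(p\) subextensions force the filtration of \(G\) itself to be constant is in fact more careful than the paper's. The one point to flag is that at your special parameter \(c=-a^p\) you have \(f=a^pg_1+g_2=0\), so you are applying the formula of Theorem \ref{thm:main} under the convention \(v_K(0)=+\infty\); this is legitimate (in the theorem's proof \(N_{L/L^\sigma}(\gamma)=(a^p-a)\alpha+f\) reduces to \((a^p-a)\alpha\), whose valuation \(v_K(g_1)\) is prime to \(p\), so the argument still goes through), but it deserves an explicit remark, and the paper's extra \(t^{-1}\) term in \(g_2\) is there precisely to keep \(f\ne 0\) and avoid this edge case.
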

\begin{proof}
In the situation of Theorem \ref{thm:main}, we set \(g_1=t^{-(p^2-1)}\) and \(g_2=ct^{-(p^2-1)}+t^{-1}\) where \(c\in k-\FF_p\). By Theorem \ref{thm:main}, we get 
\[
\bsym v_V(g_1,g_2)=\begin{cases}
p&(c\ne -a^2)\\
1&(c=-a^2)
\end{cases}
\]
In particular, \(\bsym v_V(g_1,g_2)\) depends on the value of \(c\).

On the other hand, the upper ramification filtration is compatible with passing to a quotient group. Hence the upper ramification filtration is determined from the ones of all the intermediate fields of \(L/K\) by Proposition \ref{prop:upper}. Any intermediate field is determined from a subgroup of \(\FF_pg_1+\FF_pg_2\) by Artin-Schreier theory. An upper ramification filtration of an intermediate field is determined from the valuations of all elements of the corresponding subgroup of \(\FF_pg_1+\FF_pg_2\). As \(c\not\in\FF_p\), each nonzero element of \(\FF_pg_1+\FF_pg_2\) has valuation \(-(p^2-1)\). Therefore, the ramification filtration of \(L_{g_1,g_2}\) does not depend on \(c\).
\end{proof}


\begin{thebibliography}{99}
\bibitem{Fro} A. Fr\" ohlich, \textit{Module conductors and module resolvents}, Proc. London Math. Soc. (3)\textbf{32}, no. 2, 279-321, 1976    
\bibitem{TY} F. Tonini, T. Yasuda, \textit{Moduli of formal torsors II}, arXiv:1909.09276v3, 2020, to appear in Annales l'Institat Fourier. 
\bibitem{Ser} J. P. Serre, \textit{Local fields}, Graduate texts in Mathematics 67, Springer, 1979 
\bibitem{TanYas} M. Tanno, T. Yasuda, \textit{The wild McKay correspondence for cyclic groups of prime power order}, arXiv:2006.12048, to appear in Illinois Journal of Mathematics.

\bibitem{WY} M. M. Wood, T. Yasuda, \textit{Mass formulas for local Galois representations and quotient singularities. I : A Comparison of Counting Functions}, Int. Math. Res. Not., No. 23, pp. 12590-12619, 2015
\bibitem{Yas3} T. Yasuda, \textit{Motivic integration over Deligne-Mumford Stacks}, Adv. Math. 207, no. 2, 707-761, 2006
\bibitem{Yas2} T. Yasuda, \textit{The p-cyclic McKay correspondence via motivic integration}, Compositio Math. 150, 1125-1168, 2014
\bibitem{Yas} T. Yasuda, \textit{Open problems in the wild McKay correspondence}, \url{http://www4.math.sci.osaka-u.ac.jp/~takehikoyasuda/jp/notes.html}, 2015 
\bibitem{Yas4} T. Yasuda, \textit{Wilder McKay correspondences}, Nagoya Math. J., \textbf{221}(1), 111-164, 2016
\end{thebibliography}
\end{document}